\documentclass[12pt,oneside,reqno,bibliography=totoc]{scrartcl}

\usepackage{cihan}

\author{Cihan Bahran}
\affil{Department of Mathematics, Bilkent University\\
06800 Ankara, Turkey}

\date{}

\AtBeginDocument{%
  \addtolength\abovedisplayskip{-0.5\baselineskip}%
  \addtolength\belowdisplayskip{-0.5\baselineskip}%
}

\usepackage{appendix}

\title{Local degrees of $\FI$-modules through spectral sequences}

\newcommand{\FI}{\mathbf{FI}}

\newcounter{dummy}
\makeatletter
\newcommand\sitem[1][]{\item[(#1)]\refstepcounter{dummy}\def\@currentlabel{#1}}
\makeatother


\newcommand{\cofi}[1]{\co_{#1}^{\FI}}

\DeclareMathOperator{\reg}{reg}

\DeclareMathOperator{\kk}{\mathbb{F}}

\newcommand{\tgen}{t_{0}}
\newcommand{\trel}{t_{1}}
\newcommand{\shift}[2]{\mathbf{\Sigma}^{#2}   #1 }

\newcommand{\weak}{\delta}

\newcommand{\deriv}{\mathbf{\Delta}}

\newcommand{\hell}{h}
\newcommand{\local}{\hell^{\text{max}}}

\newcommand{\locoh}[1]{\co_{\mathfrak{m}}^{#1}}

\AtBeginDocument{%
   \def\MR#1{}
}

\makeatletter
\def\blfootnote{\gdef\@thefnmark{}\@footnotetext}
\makeatother

\begin{document}
\maketitle

\begin{onecolabstract} 
We establish bounds on the local degrees of $\FI$-modules (in the sense of Church--Miller--Nagpal--Reinhold \cite{cmnr-range}) through a spectral sequence which only depend on the initial page and is independent from how many pages have been flipped.
\end{onecolabstract}
{\tableofcontents}

\section{Introduction} 
The stable degree $\weak(V)$ and local degree $\local(V)$ of an $\FI$-module\footnote{Recall that an $\FI$-module $V$ is a functor $V \colon \FI \to \lMod{\zz}$ where $\FI$ is the category of finite sets and injections. We write $V_{S}$ for its evaluation at a finite set $S$ and $V_{f} \colon V_{S} \to V_{T}$ for the induced map by an injection $f \colon S \emb T$. The functor category $[\FI, \lMod{\zz}]$ is denoted $\lMod{\FI}$.} $V$ were studied in \cite{cmnr-range}. They have the following properties:
\begin{birki}
 \item Concrete manifestation in the growth behavior of $V$: if $V$ is defined over a field $\kk$ with finite-dimensional evaluations and $\weak(V) < \infty$, the dimension function
 \[
  n \mapsto \dim_{\kk} V_{n}
 \]
is equal to a polynomial (in $n$) of degree $\weak(V)$ in the range $n > \local(V)$ \cite[Proposition 2.14]{cmnr-range}.
\vspace{0.1cm}
 \item Theoretical flexibility: the invariants $\weak$ and $\local$  can be propagated through kernels and cokernels \cite[Proposition 3.3]{cmnr-range} in a more efficient way than some of the other invariants of $\FI$-modules. 
\end{birki}
Using (2) above, \cite{cmnr-range} obtain a general result (see Theorem \ref{eski-range}) that they call a ``type A spectral sequence argument'' about how $\weak$ and $\local$ change as one flips through the pages of a spectral sequence of $\FI$-modules. A drawback of their result is that the bounds tend to get progressively worse as one flips to further pages. The main objective of this paper is to use (2) more carefully to establish \textbf{page independent} bounds in this setting.

\paragraph{Degree and torsion.} Given an $\FI$-module $W$, we write 
\begin{align*}
 \deg(W) &:= \min\{d \geq -1 : W_{S} = 0 \text{ for } |S| > d\} 
 \\
 &\in \{-1,0,1,2,3,\dots\} \cup \{\infty\} \, .
\end{align*}
An $\FI$-module $V$ is \textbf{torsion} if for every finite set $S$ and $x \in V_{S}$, there exists an injection $\alpha \colon S \emb T$ such that $V_{\alpha}(x) = 0 \in V_{T}$. We write
\begin{align*}
 \locoh{0} \colon \lMod{\FI} \rarr \lMod{\FI}
\end{align*}
for the functor which assigns an $\FI$-module its largest torsion $\FI$-submodule, and write 
\begin{align*}
 h^{0}(V) := \deg(\locoh{0}(V)) \, .
\end{align*}

\paragraph{Local cohomology and local degree.}
 The functor $\locoh{0}$ is left exact. For each $j \geq 0$, we write $\locoh{j} := \operatorname{R}^{j}\!\locoh{0}$ for the $j$-th right derived functor of $\locoh{0}$, and write 
\begin{align*}
 h^{j}(V) &\coloneqq \deg(\locoh{j}(V)) 
 \in \{-1,0,1,\dots\} \cup \{\infty\} \, ,
 \\
 \local(V) &\coloneqq \max\{h^{j}(V) : j \geq 0\}
 \in \{-1,0,1,\dots\} \cup \{\infty\} \, ,
\end{align*}
for every $\FI$-module $V$. We call $\local(V)$ the \textbf{local degree} of $V$.

\paragraph{Stable degree.} Given any $\FI$-module $V$, we write $\shift{V}{}$ for the composition 
\begin{align*}
 \FI \xrightarrow{- \sqcup \{*\}} \FI \xrightarrow{V} \lMod{\zz} \, .
\end{align*}
The assignment $V \mapsto \shift{V}{}$ defines a functor $\shift{}{} \colon \lMod{\FI} \to \lMod{\FI}$ which receives a natural transformation from the identity functor $\id_{\lMod{\FI}}$; the cokernel is denoted
\begin{align*}
 \deriv := \coker \left( \id_{\lMod{\FI}} \rarr \shift{}{} \right) \, .
\end{align*}
We set
\begin{align*}
 \weak(V) &:= \min\{r \geq -1 : \deriv^{r+1}(V) \text{ is torsion}\}
 \\
 &\in \{-1,0,1,\dots\} \cup \{\infty\} \, ,
\end{align*}
and call it the \textbf{stable degree} of $V$.

We shall call a sequence of cochain complexes $\left(V^{\star}_{r} : r \geq r_{0}\right)$ with the initial page $r_{0} \in \zz$ in an abelian category a (cohomological) \textbf{single-graded spectral sequence} if 
\[\co^{k}\!\big( V^{\star}_{r} \big) \cong V^{k}_{r+1}\] 
for every $k \in \zz$ and $r \geq r_{0}$.

\begin{thm} [{\cite[Proof of Proposition 4.1]{cmnr-range}}] \label{eski-range}
 Let $\left(V^{\star}_{r} : r \geq r_{0}\right)$ be a cohomological single-graded spectral sequence of $\FI$-modules such that at the inital page $r_{0}$, for every $k \in \zz$ the $\FI$-module $V^{k}_{r_{0}}$ satisfies
\[
 \weak\!\left(V_{r_{0}}^{k}\right) < \infty \quad \text{and} \quad
 \local\!\left(V_{r_{0}}^{k}\right) < \infty \, .
\] 
Then for every $r \geq r_{0}$ and $k \in \zz$, the following hold:
\begin{birki}
 \item $\weak\!\left(V_{r}^{k}\right) \leq \weak\!\left(V_{r_{0}}^{k}\right)$.
 \vspace{0.1cm}
 \item $\local\!\left(V_{r}^{k}\right) \leq 
 \max\!\left( 
\begin{array}{l}
\left\{
 \local\!\left(V_{r_{0}}^{\ell}\right) : \ell \leq k+r-r_{0}
 \right\} \cup
 \vspace{0.09cm}
 \\ 
 \left\{ 2 \weak\!\left(V_{r_{0}}^{\ell} \right) - 2 : \ell \leq k + r - r_{0} - 1\right\} 
\end{array}
\right)$.
\end{birki}
\end{thm}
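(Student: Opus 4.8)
The plan is to induct on the page number $r$, reducing every page flip to the two canonical short exact sequences attached to the differential $d^{k}_{r}\colon V^{k}_{r}\to V^{k+1}_{r}$. Writing $Z^{k}_{r}=\ker d^{k}_{r}$ for the cocycles and $B^{k}_{r}=\operatorname{im} d^{k-1}_{r}$ for the coboundaries, the defining isomorphism $\co^{k}(V^{\star}_{r})\cong V^{k}_{r+1}$ furnishes
\[
0\to Z^{k}_{r}\to V^{k}_{r}\to B^{k+1}_{r}\to 0
\qquad\text{and}\qquad
0\to B^{k}_{r}\to Z^{k}_{r}\to V^{k}_{r+1}\to 0 \, .
\]
These let me feed each page into the kernel/cokernel propagation bounds of \cite[Proposition 3.3]{cmnr-range} for $\weak$ and $\local$, and the whole point is to arrange the bookkeeping so that the error terms are always referred back to the initial page $r_{0}$.

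For part (1) I would use only that $\weak$ is monotone under passing to submodules and quotients (a special case of \cite[Proposition 3.3]{cmnr-range}): since $V^{k}_{r+1}=Z^{k}_{r}/B^{k}_{r}$ is a subquotient of $V^{k}_{r}$, one gets $\weak(V^{k}_{r+1})\le \weak(V^{k}_{r})$, and induction on $r$ gives $\weak(V^{k}_{r})\le\weak(V^{k}_{r_{0}})$ for all $r\ge r_{0}$. This is not merely the first claim; it is also the device that keeps part (2) page-independent.

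For part (2) I would induct on $r$ with the hypothesis that $\local(V^{k}_{r})$ is at most the right-hand side of the asserted bound, which I write $M(k,r)$. The formal input is that the long exact sequence in local cohomology makes $\local$ of any one term of a short exact sequence at most the maximum of the $\local$ of the other two; applied to the two sequences above this gives $\local(V^{k}_{r+1})\le\max(\local(B^{k}_{r}),\local(Z^{k}_{r}))$ and $\local(Z^{k}_{r})\le\max(\local(V^{k}_{r}),\local(B^{k+1}_{r}))$. The catch is that these formal bounds alone are circular: they relate cocycles and coboundaries to one another without ever closing on the $V^{\bullet}_{r}$. The resolution — and the source of the term $2\weak-2$ — is the a priori estimate for the local degree of an image provided by \cite[Proposition 3.3]{cmnr-range}, of the shape $\local(\operatorname{im} d^{k}_{r})\le\max(\local(V^{k+1}_{r}),\,2\weak(V^{k}_{r})-2)$, in which the correction is governed by the \emph{stable} degree of the source rather than by any local degree. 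Substituting this for $B^{k}_{r}$ and $B^{k+1}_{r}$ breaks the circularity and yields
\[
\local(V^{k}_{r+1})\le\max\big(\local(V^{k}_{r}),\,\local(V^{k+1}_{r}),\,2\weak(V^{k}_{r})-2,\,2\weak(V^{k-1}_{r})-2\big) \, .
\]
I would then finish by applying the inductive hypothesis to $\local(V^{k}_{r})$ and $\local(V^{k+1}_{r})$ and part (1) to the two stable-degree terms, using $\weak(V^{k}_{r})\le\weak(V^{k}_{r_{0}})$ and $\weak(V^{k-1}_{r})\le\weak(V^{k-1}_{r_{0}})$. Because the differential raises $k$ by one, the index range of the contributing initial-page terms grows by exactly one as $r$ increases by one, which is what produces the ranges $\ell\le k+r-r_{0}$ and $\ell\le k+r-r_{0}-1$; a short check confirms every term on the right lands inside $M(k,r+1)$.

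The main obstacle is the correct and sharp use of the image estimate: everything formal (the long exact sequences, the subquotient monotonicity of $\weak$) is symmetric and page-insensitive, but it is circular, and the entire improvement over Theorem \ref{eski-range} rests on replacing an iterated, page-accumulating bound on $\local$ of the coboundaries by the single estimate $\local(\operatorname{im})\le\max(\local,\,2\weak-2)$ tied to the initial page through part (1). I would therefore spend most of the effort pinning down this image bound with the right constant and verifying the index bookkeeping — in particular that the stable-degree corrections, once pulled back to $r_{0}$ via part (1), never migrate past $\ell\le k+r-r_{0}-1$, and that finiteness of $\local$ and $\weak$ is preserved at every page so that the long exact sequence arguments remain valid.
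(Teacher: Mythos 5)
Your scaffolding --- induction on the page, the two short exact sequences $0 \to Z^{k}_{r} \to V^{k}_{r} \to B^{k+1}_{r} \to 0$ and $0 \to B^{k}_{r} \to Z^{k}_{r} \to V^{k}_{r+1} \to 0$, subquotient monotonicity of $\weak$ for part (1), and the long-exact-sequence principle for $\local$ --- is the right one; note the paper itself gives no proof of Theorem \ref{eski-range} (it is quoted from \cite{cmnr-range}), but its Proposition \ref{ker-coker} and Corollary \ref{homology-bounds} package exactly this toolkit. However, the step you promote to the crux of the argument is false: there is no estimate of the shape
\[
 \local\bigl(\operatorname{im} f\bigr) \le \max\bigl(\local(B),\, 2\weak(A)-2\bigr) \qquad \text{for maps } f \colon A \to B \, ,
\]
in which the only local degree appearing is that of the target. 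Counterexample: let $B$ be the constant $\FI$-module ($B_{S} = \zz$ with all induced maps the identity, i.e.\ the free module on one generator in degree $0$), let $A \subseteq B$ be the submodule with $A_{S} = 0$ for $|S| < n$ and $A_{S} = \zz$ for $|S| \ge n$, and let $f$ be the inclusion, so that $\operatorname{im} f = A$ and $\weak(A) = 0$. The quotient $Q = B/A$ is torsion of degree $n-1$, and since $B$ is torsion-free the long exact sequence in local cohomology
\[
 0 = \locoh{0}(B) \to \locoh{0}(Q) = Q \to \locoh{1}(A)
\]
embeds $Q$ into $\locoh{1}(A)$, whence $\local(A) \ge h^{1}(A) \ge n-1$, while $\max(\local(B),\, 2\weak(A)-2)$ is a constant independent of $n$ (finite because $B$ is presented in finite degrees; in fact it equals $-1$). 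So the source's local degree cannot be dropped from an image bound --- something the paper's Proposition \ref{ker-coker} already signals, since its bounds (4) and (5) for cokernels and kernels contain $h^{1}(A)$ and $h^{0}(B)$ respectively.

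The gap is repairable, and the repair costs only one extra term. Writing $\operatorname{im} f = \coker(\ker f \to A)$ and applying Proposition \ref{ker-coker} together with Theorem \ref{after-2} twice gives the correct estimate $\local(\operatorname{im} f) \le \max\bigl(\local(A),\, \local(B),\, 2\weak(A)-2\bigr)$. Substituting this for $B^{k}_{r}$ and $B^{k+1}_{r}$, your page-to-page inequality acquires the additional entry $\local(V^{k-1}_{r})$:
\[
 \local\bigl(V^{k}_{r+1}\bigr) \le \max\bigl(
 \local(V^{k-1}_{r}),\,
 \local(V^{k}_{r}),\,
 \local(V^{k+1}_{r}),\,
 2\weak(V^{k-1}_{r})-2,\,
 2\weak(V^{k}_{r})-2
 \bigr) \, ,
\]
and your induction still closes, because your right-hand side $M(k,r)$ is non-decreasing in $k$ with $M(k-1,r) \le M(k,r) \le M(k+1,r) = M(k,r+1)$, while the two stable-degree terms, pulled back to page $r_{0}$ via part (1), have indices $k-1,\, k \le k+(r+1)-r_{0}-1$. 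Note also that the ``circularity'' you set out to break is not really there: nothing requires an image bound free of source local degrees; it suffices to express $\local$ of $Z^{\bullet}_{r}$ and $B^{\bullet}_{r}$ in terms of the page modules $V^{\bullet}_{r}$ by two applications of the kernel/cokernel bounds, and those unavoidably mix the local data of source and target. With that correction your argument proves the theorem; as written, the lemma on which you planned to ``spend most of the effort'' does not exist.
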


We first note that in fact the invariants $h^{j}$ when $j \geq 2$ can be entirely controlled by the stable degree $\weak$, so that one can restrict attention to the behavior of $h^{0}$ and $h^{1}$ in the setting of Theorem \ref{eski-range}.

\begin{thm}[{\cite[Theorem 2.10, part (4)]{cmnr-range}}] \label{after-2}
 Let $V$ be an $\FI$-module $V$ with $\weak(V) < \infty$ and $\local(V) < \infty$. Then for every $\pmb{j \geq 2}$, we have 
 \[
 h^{j}(V) \leq \max\!\left\{-1,\,2\weak(V) - 2j + 2\right\} \, .
 \]
\end{thm}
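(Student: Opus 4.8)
The plan is to telescope downward in the cohomological index using the derivative functor $\deriv$, reducing the desired bound on $h^{j}$ to the corresponding bound on $h^{1}$. The mechanism is the single inequality
\[
 h^{k}(W) \leq h^{k-1}(\deriv W) \qquad (k \geq 1),
\]
valid for every $\FI$-module $W$ with $\local(W) < \infty$. Iterating it along $V, \deriv V, \deriv^{2} V, \dots$ (whose finite local degrees propagate) gives
\[
 h^{j}(V) \leq h^{j-1}(\deriv V) \leq \cdots \leq h^{1}(\deriv^{j-1} V),
\]
and since one checks immediately from the definition that $\weak(\deriv W) = \weak(W) - 1$ as long as $W$ is non-torsion (via $\deriv^{r+1}(\deriv W) = \deriv^{r+2}(W)$), substituting into the degree-one estimate $h^{1}(W) \leq \max\{-1, 2\weak(W)\}$ with $W = \deriv^{j-1} V$ produces exactly $h^{j}(V) \leq \max\{-1, 2\weak(V) - 2j + 2\}$; in the remaining case $\deriv^{j-1} V$ is torsion and both sides equal $-1$.

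To prove the inequality I would use the four-term exact sequence coming from the natural transformation $\id \to \shift{}{}$,
\[
 0 \to K \to W \to \shift{W}{} \to \deriv W \to 0,
\]
in which $K := \ker(W \to \shift{W}{})$, consisting of elements annihilated by adjoining one point, is torsion. Factoring through the image $I := \operatorname{im}(W \to \shift{W}{})$ splits this into $0 \to K \to W \to I \to 0$ and $0 \to I \to \shift{W}{} \to \deriv W \to 0$. Running the long exact sequences in local cohomology, the vanishing $\locoh{i}(K) = 0$ for $i \geq 1$ (as $K$ is torsion) collapses the first into isomorphisms $\locoh{k}(W) \cong \locoh{k}(I)$ for $k \geq 1$, while the second yields the exact stretch $\locoh{k-1}(\deriv W) \to \locoh{k}(I) \to \locoh{k}(\shift{W}{})$, so that
\[
 h^{k}(W) = \deg \locoh{k}(I) \leq \max\{\, h^{k-1}(\deriv W),\ h^{k}(\shift{W}{}) \,\}.
\]

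It remains to beat down the second term, and this is where the real content sits. I would prove the \emph{key lemma} that the shift functor commutes with local cohomology, $\locoh{k}(\shift{W}{}) \cong \shift{\locoh{k}(W)}{}$: the functor $\shift{}{}$ is exact and preserves torsion modules (an element of $W_{S \sqcup *}$ killed by some injection is killed by one fixing the adjoined point, because permutations act invertibly), hence commutes with $\locoh{0}$; applying the exact $\shift{}{}$ to an injective resolution of $W$ then computes $\locoh{k}(\shift{W}{})$. As $\deg(\shift{U}{}) = \deg(U) - 1$ for nonzero $U$, this gives $h^{k}(\shift{W}{}) = h^{k}(W) - 1$ whenever $\locoh{k}(W) \neq 0$ (and if $\locoh{k}(W) = 0$ then $h^{k}(W) = -1$ and the target bound is automatic). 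Plugging $h^{k}(\shift{W}{}) = h^{k}(W) - 1$ into the display, finiteness of the invariants forces the strictly smaller term $h^{k}(W) - 1$ out of the maximum, leaving $h^{k}(W) \leq h^{k-1}(\deriv W)$.

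The main obstacle is exactly this commutation lemma: one must check that $\shift{}{}$ sends injective (or merely $\locoh{0}$-acyclic) $\FI$-modules to $\locoh{0}$-acyclic ones, so that it may legitimately be evaluated on an injective resolution and so commute with \emph{all} the derived functors $\locoh{k}$ rather than only with $\locoh{0}$. I would also flag the degree-one estimate $h^{1}(W) \leq \max\{-1, 2\weak(W)\}$ as a separate, genuinely nontrivial input --- it cannot come from the above inequality alone, since $h^{0}$ (the torsion degree) is not bounded by $\weak$ --- and I would import it from the lower parts of the cited result, where it rests on the same shift-theorem machinery.
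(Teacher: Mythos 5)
Theorem \ref{after-2} is not proved in this paper at all---it is quoted from \cite[Theorem 2.10, part (4)]{cmnr-range}---so your proposal has to be judged against the argument there, which runs through Nagpal's shift theorem (every $\FI$-module presented in finite degrees has a semi-induced iterated shift) and the acyclicity of semi-induced modules for $\locoh{0}$. Much of your reduction is sound: granting the commutation lemma, the inequality $h^{k}(W) \leq h^{k-1}(\deriv W)$ for $k \geq 1$ is correct, and the commutation lemma itself is true---$\shift{}{}$ is restriction along $S \mapsto S \sqcup \{\ast\}$, whose left Kan extension is computed degreewise by the exact functor sending $W$ to $T \mapsto \bigoplus_{t \in T} W_{T \setminus \{t\}}$, so $\shift{}{}$ has an exact left adjoint, hence preserves injectives, which together with $\locoh{0} \circ \shift{}{} = \shift{}{} \circ \locoh{0}$ gives the derived commutation you flagged as the main obstacle. (One small repair: the vanishing $\locoh{i}(K) = 0$ for $i \geq 1$ needs $K$ to have finite degree, not merely be torsion; this holds here because $K \subseteq \locoh{0}(W)$ and $h^{0}(W) \leq \local(W) < \infty$, then apply \cite[Proposition 4.9]{ramos-coh}.) Combined with $\weak(\deriv W) = \weak(W) - 1$, your telescope correctly reduces the case $j \geq 3$ to the case $j = 2$.

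The genuine gap is the base case, and it is fatal rather than a citation formality: the inequality $h^{1}(W) \leq \max\{-1,\, 2\weak(W)\}$ that you propose to ``import from the lower parts of the cited result'' is \emph{false}, and no part of \cite[Theorem 2.10]{cmnr-range} asserts it---the boldface restriction $\pmb{j \geq 2}$ in the statement exists precisely because neither $h^{0}$ nor $h^{1}$ admits any bound in terms of $\weak$ alone. For a counterexample, fix $a \geq 2$, let $M$ be the constant $\FI$-module ($M_{S} = \zz$ for every $S$, all induced maps the identity) and let $W \subseteq M$ be its truncation with $W_{S} = \zz$ for $|S| \geq a$ and $W_{S} = 0$ otherwise. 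Then $\deriv W$ is torsion concentrated in degree $a-1$, so $\weak(W) = 0$ and $W$ is presented in finite degrees; but the long exact sequence of local cohomology applied to $0 \to W \to M \to M/W \to 0$, using that $M$ is induced (so $\locoh{i}(M) = 0$ for all $i$) and that $M/W$ is torsion of degree $a-1$, gives $\locoh{1}(W) \cong M/W$, hence $h^{1}(W) = a-1 > 0 = 2\weak(W)$. The same phenomenon persists for modules in the image of $\deriv$ (truncate the free module on one generator instead), so the final link of your chain, $h^{1}\!\left(\deriv^{j-1}V\right) \leq \max\{-1,\, 2\weak(\deriv^{j-1}V)\}$, is simply unavailable. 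Indeed, the fact that $h^{0}$ and $h^{1}$ are independent invariants not controlled by the stable degree is exactly the point of the paper this theorem is quoted in: Theorem \ref{yeni-range} tracks $h^{0}$ and $h^{1}$ separately for this very reason. The $j = 2$ case is where the entire content of the theorem sits, and it requires the shift-theorem/semi-induced input that your sketch defers; the telescoping argument alone cannot produce it.
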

 We can now state our main result.
\begin{thmx} \label{yeni-range}
 Assume the setup and notation of Theorem \ref{eski-range}. Then for every $r \geq r_{0}$ and $k \in \zz$, setting
 \[
  C_{k} \coloneqq \max\!\left\{
  h^{0}\!\left(V_{r_{0}}^{k+1}\right),\,
  h^{1}\!\left(V_{r_{0}}^{k}\right),\,
  2\weak\!\left(V_{r_{0}}^{k} \right)-2,\,
  2\weak\!\left(V_{r_{0}}^{k-1} \right)-2
  \right\} \, ,
 \]
we have
\[
  h^{0}\!\left(V_{r}^{k}\right) \leq C_{k-1} \quad \text{and} 
  \quad
  h^{1}\!\left(V_{r}^{k}\right) \leq C_{k} \, .
\]
\end{thmx}

In applications of $\FI$-modules, invariants other than $\weak$ and $\local$  are often needed. We proceed with their introduction so that we can state a comprehensive result involving more invariants.

\paragraph{$\FI$-homology and regularity.} \label{defn:reg}
Consider the functor $\cofi{0} : \lMod{\FI} \rarr \lMod{\FI}$ defined via 
\begin{align*}
 \cofi{0}(V)_{S} := \coker\! \left(
 \bigoplus_{T \subsetneq S} V_{T} \rarr V_{S}
 \right)
\end{align*}
for every finite set $S$, which is right exact. For each $i \geq 0$, we write $\cofi{i} := \operatorname{L}_{i}\!\cofi{0}$ for its $i$-th left derived functor, and write 
\begin{align*}
 t_{i}(V) &:= \deg\!\left( \cofi{i}(V) \right) 
 \\
 &\in \{-1,0,1,\dots\} \cup \{\infty\} \, ,
 \\
 \reg(V) &:= \max\{t_{i}(V) - i : i \geq 1\} \\
  &\in\{-2,-1,0,1,\dots\} \cup\{\infty\} \, ,
\end{align*}
for every $\FI$-module $V$. We say that $V$ is \textbf{presented in finite degrees} if $\tgen(V)$ and $\trel(V)$ are both finite. We call $\reg(V)$ the \textbf{regularity} of $V$.

\begin{rem}
 An $\FI$-module $V$ is presented in finite degrees if and only if $\weak(V) < \infty$ and $\local(V) < \infty$ by \cite[Proposition 3.1]{cmnr-range} and \cite[Theorem 2.11]{bahran-polynomial}.
\end{rem}

Thanks to the known relationships between the various invariants of $\FI$-modules we deduce from Theorem \ref{yeni-range} the following.

\begin{thmx} \label{komple}
 Let $\left(V^{\star}_{r} : r \geq r_{0}\right)$ be a cohomological single-graded spectral sequence of $\FI$-modules and 
 \[
 (\weak_{k} : k \in \zz),\, \quad
 (\alpha_{k} : k \in \zz),\, \quad
 (\beta_{k} : k \in \zz),\, \quad
 \]
 three $\zz$-indexed sequences of integers $\geq -1$ such that at the inital page $r_{0}$, for every $k \in \zz$ the $\FI$-module $V^{k}_{r_{0}}$ is presented in finite degrees with
 \[
  \weak\!\left( V_{r_{0}}^{k}\right) \leq \weak_{k} \, , \quad 
  \quad
  h^{0}\!\left( V_{r_{0}}^{k}\right) \leq \alpha_{k} \, , \quad 
  \quad
  h^{1}\!\left( V_{r_{0}}^{k}\right) \leq \beta_{k}  \, .
 \] 
Then for every $r \geq r_{0}$ and $k \in \zz$, the $\FI$-module $V_{r}^{k}$ at page $r$ satisfies the following:
\begin{birki}
 \item $\weak\!\left(V_{r}^{k}\right) \leq \weak_{k}$.
 \vspace{0.1cm}
 \item For every $j \in \zz_{\geq 0}$, we have
 \[
 h^{j}\!\left(V_{r}^{k}\right) \leq 
\begin{cases}
 \max\!\left\{
  \alpha_{k},\,
  \beta_{k-1},\,
  2\weak_{k-1}-2,\,
  2\weak_{k-2}-2
  \right\} & \text{if $j=0$,}
  \vspace{0.1cm}
  \\
  \max\!\left\{
  \alpha_{k+1},\,
  \beta_{k},\,
  2\weak_{k}-2,\,
  2\weak_{k-1}-2
  \right\} & \text{if $j=1$,}
  \vspace{0.1cm}
  \\
  \max\!
  \left\{-1,\,
  2\weak_{k} -2j+2
  \right\} & \text{if $j \geq 2$.}
\end{cases}
 \]
 \vspace{0.1cm}
 \item $\local\!\left(V_{r}^{k}\right) \leq \max\!\left(
\begin{array}{l}
 \left\{
 \alpha_{k},\, 
 \alpha_{k+1}
 \right\} \cup
 \left\{
 \beta_{k-1},\, 
 \beta_{k}
 \right\} \cup
 \vspace{0.1cm}
 \\
 \left\{ 2\weak_{k}-2,\,
 2\weak_{k-1}-2,\,
 2\weak_{k-2}-2
 \right\}
\end{array}
 \right)$.
  \vspace{0.18cm}
 \item Both $\tgen\!\left(V_{r}^{k}\right)$ and $\reg\!\left(V_{r}^{k}\right)$ (and hence $\trel\!\left(V_{r}^{k}\right)-1$) are bounded above by
 \[
 \begin{cases}
  \alpha_{k} & \text{if $\weak_{k} = -1$,}
  \\
  \max\!\left(
\begin{array}{l}
 \left\{
 \alpha_{k},\, 
 \alpha_{k+1} + 1
 \right\} \cup
 \left\{
 \beta_{k-1},\, 
 \beta_{k} + 1
 \right\} \cup
 \vspace{0.1cm}
 \\
 \left\{ 2\weak_{k},\,
 2\weak_{k-1}-1,\,
 2\weak_{k-2}-2
 \right\}
\end{array}
 \right) & \text{if $\weak_{k} \geq 0$.}
\end{cases}
\]
\end{birki}
\end{thmx}

Finally, we state a corollary to Theorem \ref{komple} which roughly reproduces the bounds at the initial page of a spectral sequence at every further page with no losses.

\begin{corx}
 Let $\left(V^{\star}_{r} : r \geq r_{0}\right)$ be a cohomological single-graded spectral sequence of $\FI$-modules and $(\weak_{k} : k \in \zz)$ a $\zz$-indexed sequence of integers $\geq -1$ such that at the inital page $r_{0}$, for every $k \in \zz$ the $\FI$-module $V^{k}_{r_{0}}$ is presented in finite degrees with
\begin{itemize}
 \item $\weak\!\left( V_{r_{0}}^{k}\right) \leq \weak_{k}$,
 \vspace{0.1cm}
 \item $h^{0}\!\left( V_{r_{0}}^{k}\right) \leq \max\!\left\{
  -1,\,
  2\weak_{k-1}-2,\,
  2\weak_{k-2}-2
  \right\}$,
 \vspace{0.1cm}
 \item $h^{1}\!\left( V_{r_{0}}^{k}\right) \leq \max\!\left\{
  -1,\,
  2\weak_{k}-2,\,
  2\weak_{k-1}-2
  \right\}$.
\end{itemize}
Then for every $r \geq r_{0}$ and $k \in \zz$, the $\FI$-module $V_{r}^{k}$ at page $r$ satisfies the following:
\begin{birki}
 \item $\weak\!\left(V_{r}^{k}\right) \leq \weak_{k}$.
 \vspace{0.1cm}
 \item For every $j \in \zz_{\geq 0}$, we have
 \[
 h^{j}\!\left(V_{r}^{k}\right) \leq 
\begin{cases}
 \max\!\left\{
  -1,\,
  2\weak_{k-1}-2,\,
  2\weak_{k-2}-2
  \right\} & \text{if $j=0$,}
  \vspace{0.1cm}
  \\
  \max\!\left\{
  -1,\,
  2\weak_{k}-2,\,
  2\weak_{k-1}-2
  \right\} & \text{if $j=1$,}
  \vspace{0.1cm}
  \\
  \max\!
  \left\{-1,\,
  2\weak_{k} -2j+2
  \right\} & \text{if $j \geq 2$.}
\end{cases}
 \]
 \vspace{0.1cm}
 \item $\local\!\left(V_{r}^{k}\right) \leq \max\!
 \left\{ -1,\, 2\weak_{k}-2,\,
 2\weak_{k-1}-2,\,
 2\weak_{k-2}-2
 \right\}$.
  \vspace{0.18cm}
 \item Both $\tgen\!\left(V_{r}^{k}\right)$ and $\reg\!\left(V_{r}^{k}\right)$ (and hence $\trel\!\left(V_{r}^{k}\right)-1$) are bounded above by
 \[
 \begin{cases}
  \max\!\left\{
  -1,\,
  2\weak_{k-1}-2,\,
  2\weak_{k-2}-2
  \right\} & \text{if $\weak_{k} = -1$,}
  \\
  \max\!
 \left\{ 2\weak_{k},\,
 2\weak_{k-1}-1,\,
 2\weak_{k-2}-2
 \right\}
  & \text{if $\weak_{k} \geq 0$.}
\end{cases}
\]
\end{birki}
\end{corx}
\begin{proof}
 We may apply Theorem \ref{komple} with the same $\weak_{k}$ and 
\begin{align*}
 \alpha_{k} &\coloneqq \max\!\left\{
  -1,\,
  2\weak_{k-1}-2,\,
  2\weak_{k-2}-2
  \right\} \, ,
  \\
 \beta_{k} &\coloneqq \max\!\left\{
  -1,\,
  2\weak_{k}-2,\,
  2\weak_{k-1}-2
  \right\} \, ,
\end{align*}
for every $k \in \zz$. Then for every $k \in \zz$ we have $\alpha_{k} = \beta_{k-1}$ so that
\begin{align*}
 \max\!\left\{
  \alpha_{k},\,
  \beta_{k-1}
  \right\} &= \alpha_{k} 
  \\
  &= \max\!\left\{
  -1,\,
  2\weak_{k-1}-2,\,
  2\weak_{k-2}-2
  \right\} \, ,
  \\
  \max\!\left\{\alpha_{k},\,\alpha_{k+1}\right\} 
  &=
  \max\!\left\{\beta_{k-1},\,\beta_{k}\right\} 
  \\
  &= \{-1,\,2\weak_{k}-2,\,2\weak_{k-1}-2,\, 2\weak_{k-2}-2\} \, ,
  \\
  \max\!\left\{\alpha_{k},\,\alpha_{k+1} + 1\right\} 
  &=
  \max\!\left\{\beta_{k-1},\,\beta_{k} + 1\right\} 
  \\
  &= \{0,\,
  2\weak_{k}-1,\,
  2\weak_{k-1}-1,\,
  2\weak_{k-2}-2\} \, ,
\end{align*}
yielding all the bounds as stated.
\end{proof}
\section{Preliminaries} 

\begin{prop}[{\cite{cmnr-range}}] \label{ker-coker}
 Let $f \colon A \to B$ be a map of $\FI$-modules presented in finite degrees. Then the $\FI$-modules $\ker f$, $\coker f$ are also presented in finite degrees and the following hold:
\begin{birki}
 \item $\weak(\ker f) \leq \weak(A)$.
 \vspace{0.1cm}
 \item $\weak(\coker f) \leq \weak(B)$.
 \vspace{0.1cm}
 \item $h^{0}(\ker f) \leq h^{0}(A)$.
 \vspace{0.1cm}
 \item $h^{0}(\coker f) \leq \max\{h^{0}(B),\, h^{1}(A),\,2\weak(A) - 2\}$.
 \vspace{0.1cm}
 \item $h^{1}(\ker f) \leq \max\{h^{0}(B),\, h^{1}(A)\}$.
 \vspace{0.1cm}
 \item $h^{1}(\coker f) \leq \max\{h^{1}(B),\, 2\weak(A)-2\}$. 
\end{birki}
\end{prop}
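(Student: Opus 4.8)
The plan is to factor $f$ through its image and then play the two resulting short exact sequences off against the long exact sequence in local cohomology. Write $I$ for the image of $f$, so that we have
\[
0 \rarr \ker f \rarr A \rarr I \rarr 0 \qquad \text{and} \qquad 0 \rarr I \rarr B \rarr \coker f \rarr 0 \, .
\]
Before any numerics I would settle finiteness: since the class of $\FI$-modules presented in finite degrees is (qualitatively) closed under kernels and cokernels, both $\ker f$ and $\coker f$ (and $I$) are presented in finite degrees, so in particular all of their local degrees $h^{j}$ are finite. The stable-degree bounds (1) and (2) then follow from monotonicity of $\weak$ under submodules and quotients: the functor $\deriv$ is right exact, hence so is each iterate $\deriv^{r+1}$, which carries a quotient to a quotient and gives $\weak(\coker f) \leq \weak(B)$; while $\weak(\ker f) \leq \weak(A)$ (and likewise $\weak(I) \leq \weak(A)$, which I will reuse) follows because $\weak$ of the middle term of a short exact sequence is the maximum of those of the sub and the quotient, $\weak$ being a dimension-type invariant after inverting torsion. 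Bound (3) is immediate: $\locoh{0}$ is left exact, so the inclusion $\ker f \hookrightarrow A$ realizes $\locoh{0}(\ker f)$ as a submodule of $\locoh{0}(A)$, whence $h^{0}(\ker f) \leq h^{0}(A)$.

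For the remaining three bounds I would feed each short exact sequence into the long exact sequence $\cdots \to \locoh{j}(A') \to \locoh{j}(B') \to \locoh{j}(C') \to \locoh{j+1}(A') \to \cdots$, together with the elementary estimate that the degree of the middle term of a short exact sequence is at most the maximum of the degrees of the two outer terms. Reading off the relevant three-term exact segments: from $\locoh{0}(I) \to \locoh{1}(\ker f) \to \locoh{1}(A)$ combined with $\locoh{0}(I) \hookrightarrow \locoh{0}(B)$ (as $I \subseteq B$) we obtain (5); from $\locoh{0}(B) \to \locoh{0}(\coker f) \to \locoh{1}(I)$ we obtain $h^{0}(\coker f) \leq \max\{h^{0}(B), h^{1}(I)\}$; and from $\locoh{1}(B) \to \locoh{1}(\coker f) \to \locoh{2}(I)$ we obtain $h^{1}(\coker f) \leq \max\{h^{1}(B), h^{2}(I)\}$. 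This reduces (4) and (6) to bounding the one-step-higher terms $h^{1}(I)$ and $h^{2}(I)$ in terms of $\weak(A)$ alone.

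The crux is exactly this reduction, and it is where Theorem \ref{after-2} does the work. Applied to $I$ it gives $h^{2}(I) \leq \max\{-1, 2\weak(I) - 2\} \leq \max\{-1, 2\weak(A) - 2\}$, which together with the last segment yields (6). For $h^{1}(I)$ I would read the segment $\locoh{1}(A) \to \locoh{1}(I) \to \locoh{2}(\ker f)$ of the first short exact sequence, so that $h^{1}(I) \leq \max\{h^{1}(A), h^{2}(\ker f)\}$, and invoke Theorem \ref{after-2} once more to get $h^{2}(\ker f) \leq \max\{-1, 2\weak(A) - 2\}$; hence $h^{1}(I) \leq \max\{h^{1}(A), 2\weak(A) - 2\}$ and therefore (4). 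In every case the term $2\weak(A) - 2$ is only ever competing against $h$-terms that are $\geq -1$, so it may be freely included or absorbed. The main obstacle I anticipate is not the bookkeeping but the two structural inputs that must be in place first: the monotonicity of $\weak$ under submodules (resting on $\weak$ being a dimension after inverting torsion) and the closure of the class of modules presented in finite degrees under kernels and cokernels — without the latter, Theorem \ref{after-2} could not legitimately be applied to $I$ and $\ker f$, which is what makes the entire chain of estimates go through.
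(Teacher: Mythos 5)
Your proof is correct, and it takes a more self-contained route than the paper's, which is mostly citation. The paper imports from \cite{cmnr-range} the closure of finite presentation under kernels and cokernels, parts (1), (2), (3), (5) verbatim, and the inequality $h^{j}(\coker f) \leq \max\{h^{j}(B),\, h^{j+1}(A),\, h^{j+2}(\ker f)\}$ from the end of the proof of their Proposition 3.3; its only original content is the finishing move for (4) and (6), namely applying Theorem \ref{after-2} together with part (1) to absorb the higher local-cohomology terms into $2\weak(A)-2$. What you do instead is unpack the imported inequality: factoring $f$ through its image $I$ and splicing the local-cohomology long exact sequences of $0 \to \ker f \to A \to I \to 0$ and $0 \to I \to B \to \coker f \to 0$ recovers exactly those three-term estimates (your chain $h^{0}(\coker f) \leq \max\{h^{0}(B),\, h^{1}(I)\} \leq \max\{h^{0}(B),\, h^{1}(A),\, h^{2}(\ker f)\}$ is the $j=0$ case), and your finishing move is the same as the paper's, only applied to $\ker f$ and $I$ rather than to $\ker f$ and $A$ --- which works because $\weak(I) \leq \weak(A)$. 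Two inputs you use do remain black boxes and deserve the citations you gesture at: the closure of ``presented in finite degrees'' under kernels and cokernels (needed, as you correctly note, to legitimize applying Theorem \ref{after-2} to $I$ and $\ker f$), and the max-additivity of $\weak$ over short exact sequences of such modules; both are genuine results of \cite{cmnr-range}, not formal consequences of right-exactness of $\deriv$ alone, so they must be cited rather than waved through. What each approach buys: yours makes Proposition \ref{ker-coker} verifiable without opening \cite{cmnr-range} to a specific page of a specific proof, while the paper's version is shorter and makes transparent that the proposition is a mild sharpening, via Theorem \ref{after-2}, of bounds already present in the literature.
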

\begin{proof}
 Both $\ker f$ and $\coker f$ are presented in finite degrees by \cite[Theorem 2.3]{cmnr-range}. The bounds (1) and (2) are literally \cite[Proposition 3.3, (1) and (2)]{cmnr-range}. The bounds (3) and (5) are obtained in \cite[proof of Proposition 3.3, page 18]{cmnr-range}. At the end of the same proof, it is also obtained that
 \[
  h^{j}(\coker f) \leq \max\{h^{j}(B),\, h^{j+1}(A),\, h^{j+2}(\ker f)\} 
 \]
 for every $j \geq 0$. Setting $j \coloneqq 0$ here and using Theorem \ref{after-2} together with part (1) yields
\begin{align*}
 h^{0}(\coker f) &\leq \max\{h^{0}(B),\, h^{1}(A),\, h^{2}(\ker f)\} 
 \\
 &\leq \max\{h^{0}(B),\, h^{1}(A),\, 2\weak(\ker f) - 2\} 
 \\
 &\leq \max\{h^{0}(B),\, h^{1}(A),\, 2\weak(A) - 2\} 
\end{align*}
hence the bound (4) follows, whereas setting $j \coloneqq 1$ and using Theorem \ref{after-2} together with part (1) yields
\begin{align*}
 h^{1}(\coker f) &\leq \max\{h^{1}(B),\, h^{2}(A),\, h^{3}(\ker f)\} 
 \\
 &\leq \max\{h^{1}(B),\, 2\weak(A)-2,\, 2\weak(\ker f) - 4\} 
 \\
 &\leq \max\{h^{1}(B),\, 2\weak(A)-2,\, 2\weak(A) - 4\} 
 = \max\{h^{1}(B),\, 2\weak(A)-2\}
\end{align*}
hence the bound (6) follows.
\end{proof}

\begin{cor} \label{homology-bounds}
 Let $X \xrightarrow{f} E \xrightarrow{g} Y$ be maps of $\FI$-modules presented in finite degrees with $gf = 0$ and set
 \[
  H \coloneqq \ker g / \im f \, .
 \]
  Then the $\FI$-module $H$ is also presented in finite degrees and the following hold:
\begin{birki}
 \item $\weak(H) \leq \weak(E)$.
 \vspace{0.1cm}
 \item $h^{0}(H) \leq \max\{h^{0}(E),\, h^{1}(X),\,2\weak(X) - 2\}$.
 \vspace{0.1cm}
 \item $h^{1}(H) \leq \max\{h^{0}(Y),\, h^{1}(E),\, 2\weak(X)-2\}$.
\end{birki}
\end{cor}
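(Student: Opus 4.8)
The plan is to exhibit $H$ as the kernel (equivalently, as a cokernel) of a \emph{single} map between $\FI$-modules presented in finite degrees, so that Corollary \ref{homology-bounds} reduces to two successive applications of Proposition \ref{ker-coker}.

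Because $gf = 0$, the map $g$ factors through the quotient $\coker f = E/\im f$, inducing a map $\bar{g} \colon \coker f \to Y$ with $\ker \bar{g} = \ker g / \im f = H$. (Symmetrically, $f$ corestricts to a map $\bar{f} \colon X \to \ker g$ with $\coker \bar{f} = H$; either factorization yields the same final bounds.) I would use the first description. First I would apply Proposition \ref{ker-coker} to the single map $f \colon X \to E$: its parts (2), (4), (6) give $\weak(\coker f) \leq \weak(E)$, then $h^{0}(\coker f) \leq \max\{h^{0}(E),\, h^{1}(X),\, 2\weak(X) - 2\}$, and $h^{1}(\coker f) \leq \max\{h^{1}(E),\, 2\weak(X) - 2\}$. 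In particular $\coker f$ is presented in finite degrees, and since $Y$ is as well, the induced map $\bar{g}$ is a map of $\FI$-modules presented in finite degrees; hence $H = \ker \bar{g}$ is presented in finite degrees by the first assertion of Proposition \ref{ker-coker}.

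Then I would apply the \emph{kernel} bounds of Proposition \ref{ker-coker} to $\bar{g} \colon \coker f \to Y$ and substitute the cokernel estimates just obtained. Part (1) gives $\weak(H) \leq \weak(\coker f) \leq \weak(E)$, which is (1). Part (3) gives $h^{0}(H) \leq h^{0}(\coker f) \leq \max\{h^{0}(E),\, h^{1}(X),\, 2\weak(X) - 2\}$, which is (2). Part (5) gives $h^{1}(H) \leq \max\{h^{0}(Y),\, h^{1}(\coker f)\} \leq \max\{h^{0}(Y),\, h^{1}(E),\, 2\weak(X) - 2\}$, which is (3).

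There is no real obstacle: the entire content is the observation that $H$ is a single kernel/cokernel, after which the bounds chain together mechanically. The one point meriting attention is that the term $2\weak(X) - 2$ must be introduced at the \emph{cokernel} step (whose source is $X$), while the kernel step contributes no new $\weak$-term; forming $\coker f$ first and taking $\ker \bar{g}$ second arranges exactly this, so the composite estimates collapse to the stated right-hand sides with no slack.
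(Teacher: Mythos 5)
Your proof is correct and is essentially the paper's argument in mirror image: the paper factors $H \cong \coker\!\left(X \xrightarrow{f} \ker g\right)$ (kernel first, then cokernel) whereas you use the dual factorization $H = \ker\!\left(\coker f \to Y\right)$, and both routes chain Proposition \ref{ker-coker} twice to arrive at identical bounds --- indeed the ``symmetric'' alternative you mention in passing is exactly the paper's choice. One small caveat: your closing remark that forming $\coker f$ first is what ``arranges'' the $2\weak(X)-2$ terms is overstated, since in the paper's opposite order the cokernel step is applied to $X \to \ker g$, whose source is still $X$, so the same terms appear with no slack either way.
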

\begin{proof}
 Since $H \cong \coker\!\left(X \xrightarrow{f} \ker g\right)$, by Proposition \ref{ker-coker} the $\FI$-module $H$ is presented in finite degrees with 
\begin{align*}
 \weak(H) &\leq \weak\!\left(
 \ker g
 \right) \leq \weak(E) \, ,
 \\
 h^{0}(H) &\leq \max\{h^{0}(\ker g),\, h^{1}(X),\,2\weak(X) - 2\}
 \leq \max\{h^{0}(E),\, h^{1}(X),\,2\weak(X) - 2\}\,,
 \\
 h^{1}(H) &\leq \max\{h^{1}(\ker g),\, 2\weak(X)-2\} \leq 
 \max\{h^{0}(Y),\, h^{1}(E),\, 2\weak(X)-2\}\,.
\end{align*}
\end{proof}

\section{Proofs}
\begin{proof}[Proof of \textbf{\emph{Theorem \ref{yeni-range}}}]
 We employ induction on $r$. The base case $r=r_{0}$ follows immediately from the definition of $C_{k}$'s. Next, assume the claim holds for a fixed $r \geq r_{0}$. At page $r+1$, by definition for every $k \in \zz$ the $\FI$-module $V^{k}_{r+1}$ is the homology of the complex
 \[ 
  V_{r}^{k-1} \to V_{r}^{k} \to V_{r}^{k+1} \, ,
 \]
so by part (1) of Theorem \ref{eski-range}, Corollary \ref{homology-bounds} and the induction hypothesis, we have
\begin{align*}
 h^{0}\!\left(V_{r+1}^{k}\right) 
 &\leq \max\!\left\{h^{0}\!\left( V_{r}^{k} \right),\, 
 h^{1}\!\left(V_{r}^{k-1}\right),\,
 2\weak\!\left( V_{r}^{k-1} \right) - 2\right\}
 \\
 &\leq \max\!\left\{C_{k-1},\, 
 2\weak\!\left( V_{r_{0}}^{k-1} \right) - 2\right\} = C_{k-1}
\end{align*}
and
\begin{align*}
 h^{1}\!\left( V^{k}_{r+1} \right) &\leq 
 \max\!\left\{
 h^{0}\!\left( V_{r}^{k+1} \right),\, 
 h^{1}\!\left( V_{r}^{k} \right),\, 
 2\weak\!\left( V_{r}^{k-1} \right)-2
 \right\}
 \\
 &\leq \max\!\left\{ C_{k},\,
 2\weak\!\left( V_{r_{0}}^{k-1} \right) - 2  \right\} = C_{k} \, ,
\end{align*}
establishing the claim for the page $r+1$.
\end{proof}

\begin{proof}[Proof of \emph{\textbf{Theorem \ref{komple}}}]
 At the initial page $r_{0}$, for every $k \in \zz$ the 
the $\FI$-module $V^{k}_{r_{0}}$ satisfies
\[
 \weak\!\left(V_{r_{0}}^{k}\right) < \infty \quad \text{and} \quad
 \local\!\left(V_{r_{0}}^{k}\right) < \infty 
\] 
by \cite[Proposition 3.1]{cmnr-range}.

The bound (1) follows from part (1) of Theorem \ref{eski-range} by induction on $r$. Setting
\[
  C_{k} \coloneqq \max\!\left\{
  h^{0}\!\left(V_{r_{0}}^{k+1}\right),\,
  h^{1}\!\left(V_{r_{0}}^{k}\right),\,
  2\weak\!\left(V_{r_{0}}^{k} \right)-2,\,
  2\weak\!\left(V_{r_{0}}^{k-1} \right)-2
  \right\} \, ,
 \] 
by the hypotheses and part (1) we have
\[
 C_{k} \leq \max\!\left\{
  \alpha_{k+1},\,
  \beta_{k},\,
  2\weak_{k}-2,\,
  2\weak_{k-1}-2
  \right\} \, .
\]
Now Theorem \ref{yeni-range} yields
\begin{align*}
  h^{0}\!\left(V_{r}^{k}\right) 
 &\leq C_{k-1} 
 \leq
 \max\!\left\{
  \alpha_{k},\,
  \beta_{k-1},\,
  2\weak_{k-1}-2,\,
  2\weak_{k-2}-2
  \right\} \, , 
 \\
 h^{1}\!\left(V_{r}^{k}\right) 
 &\leq C_{k}
 \leq \max\!\left\{
  \alpha_{k+1},\,
  \beta_{k},\,
  2\weak_{k}-2,\,
  2\weak_{k-1}-2
  \right\} \, .
\end{align*}
Together with part (1) and Theorem \ref{after-2}, the bounds in (2) follow. The bound (3) follows immediately from (2) and the definition of $\local$. 

If $\weak_{k} = -1$, then by part (1) and the definition of $\weak$, the $\FI$-module $V_{r_{0}}^{k}$ is torsion and hence has finite degree; thus its subquotient $V_{r}^{k}$ also has finite degree so that by \cite[Proposition 4.9]{ramos-coh} we have
\begin{align*}
 h^{j}\!\left(V_{r}^{k}\right) &\leq 
\begin{cases}
 \deg\!\left(V_{r_{0}}^{k}\right) & \text{if $j=0$,}
 \\
 -1 & \text{if $j \geq 1$.}
\end{cases}
\\ 
 &\leq 
\begin{cases}
 \alpha_{k} & \text{if $j=0$,}
 \\
 -1 & \text{if $j \geq 1$.}
\end{cases}
\end{align*}
Let us write
\[
 R_{k} \coloneqq \max\!\left(
\begin{array}{l}
 \left\{
 \alpha_{k},\, 
 \alpha_{k+1} + 1
 \right\} \cup
 \left\{
 \beta_{k-1},\, 
 \beta_{k} + 1
 \right\} \cup
 \vspace{0.1cm}
 \\
 \left\{ 2\weak_{k},\,
 2\weak_{k-1}-1,\,
 2\weak_{k-2}-2
 \right\}
\end{array}
 \right) \, .
\]
In general, by \cite[Corollary 4.15]{ramos-coh} and part (2), we have
\begin{align*}
 \reg\!\left(V_{r}^{k}\right) 
 &\leq \max\!\left( \{-1\} \cup\left\{
 h^{j}\!\left(V_{r}^{k}\right) + j : h^{j}\!\left(V_{r}^{k}\right) \geq 0
 \right\}\right)
 \\
 &\leq
\begin{cases}
 \alpha_{k} & \text{if $\weak_{k} = -1$,}
 \\
 \max\!\left\{
 h^{j}\!\left(V_{r}^{k}\right) + j : 0 \leq j \leq \weak_{k}+1
 \right\}
 & \text{if $\weak_{k} \geq 0$.} 
\end{cases}
 \\
 &\leq
\begin{cases}
 \alpha_{k} & \text{if $\weak_{k} = -1$,}
 \\
 \max\!\left( 
\begin{array}{l}
 \left\{h^{0}\!\left(V_{r}^{k}\right),\,
 h^{1}\!\left(V_{r}^{k}\right) + 1\right\} \cup
 \\
 \left\{
 2\weak_{k} - j + 2 : 2 \leq j \leq \weak_{k}+1
 \right\} 
\end{array}
 \right)
 & \text{if $\weak_{k} \geq 0$.} 
\end{cases} 
 \\
 &\leq
\begin{cases}
 \alpha_{k} & \text{if $\weak_{k} = -1$,}
 \\
 \max\!
 \left\{h^{0}\!\left(V_{r}^{k}\right),\,
 h^{1}\!\left(V_{r}^{k}\right) + 1,\, 2\weak_{k}\right\}
 & \text{if $\weak_{k} \geq 0$.} 
\end{cases} 
 \\
 &\leq
\begin{cases}
 \alpha_{k} & \text{if $\weak_{k} = -1$,}
 \\
 R_{k}
 & \text{if $\weak_{k} \geq 0$.} 
\end{cases} 
\end{align*}
 Finally by \cite[Corollary 2.10]{bahran-reg}, part (1), and the regularity bound above, we also have
\begin{align*}
 \tgen\!\left( V_{r}^{k} \right) 
 &\leq
  \max\!\left\{\weak\!\left( V_{r}^{k} \right),\,
  \trel\!\left( V_{r}^{k} \right) - 1 \right\}
  \\
 &\leq
  \max\!\left\{\weak_{k},\,
  \reg\!\left( V_{r}^{k} \right) \right\} 
  \\
 &\leq
\begin{cases}
 \max\{-1,\,\alpha_{k}\} & \text{if $\weak_{k} = -1$,}
 \\
 \max\{\weak_{k},\,R_{k}\}
 & \text{if $\weak_{k} \geq 0$.} 
\end{cases} 
 \\
 &\leq
\begin{cases}
 \alpha_{k} & \text{if $\weak_{k} = -1$,}
 \\
 R_{k}
 & \text{if $\weak_{k} \geq 0$.}  
\end{cases}
\end{align*}
because $\alpha_{k} \geq -1$ and $R_{k} \geq 2\weak_{k} \geq \weak_{k}$ when $\weak_{k} \geq 0$.
\end{proof}

\bibliographystyle{hamsalpha}
\bibliography{stable-boy}

\end{document}